\newtheorem{theorem}{Theorem}[section]
\newtheorem{corollary}[theorem]{Corollary}
\newtheorem{lemma}[theorem]{Lemma}
\theoremstyle{definition}
\theoremstyle{remark}
\newtheorem{remark}[theorem]{Remark}
\numberwithin{equation}{section}
\begin{document}

\title{Extremal functions of boundary Schwarz lemma}

\author[G. B. Ren]{Guangbin Ren}

\thanks{This work was supported by the NNSF  of China (11071230), RFDP (20123402110068).}

\author[X. P. Wang]{Xieping Wang}
\address{Guangbin Ren, Department of Mathematics, University of Science and
Technology of China, Hefei 230026, China}
\email{rengb$\symbol{64}$ustc.edu.cn}
\address{Xieping Wang, Department of Mathematics, University of Science and
Technology of China, Hefei 230026, China}
\email{pwx$\symbol{64}$mail.ustc.edu.cn}

\keywords{Boundary Schwarz lemma, Extremal function, Osserman inequality.}
\subjclass[2010]{30C35, 32A10}

\begin{abstract}
In this paper, we present an alternative and  elementary proof of a sharp version of the classical  boundary Schwarz lemma by Frolova et al. with initial proof via  analytic semigroup approach and   Julia-Carath\'{e}odory theorem  for univalent holomorphic self-mappings of the open unit disk $\mathbb D\subset \mathbb C$. Our approach has its extra advantage to get the extremal functions of the inequality in the boundary Schwarz lemma.

\end{abstract}
\maketitle

\section{Introduction}
The Schwarz lemma as one of the most influential  results in  complex analysis  puts a great push to the development of several research fields,   such as  geometric function theory, hyperbolic geometry,  complex dynamical systems,  composition operators theory, and theory of quasiconformal mappings. We refer to \cite{Abate, EJLS} for  a more complete insight on the Schwarz lemma.

The classical Schwarz lemma as well as  the Schwarz-Pick lemma concerns with  holomorphic self-mappings of the open unit disk $\mathbb D$ in the complex plane $\mathbb C$, which  provides the invariance of the hyperbolic disks around the interior fixed point under self-mappings of $\mathbb D$. A verity of its boundary versions are in the spirit of Julia \cite{Julia}, Carath\'{e}odory \cite{Caratheodory}, and  Wolff \cite{Wolff1, Wolff2} involving the boundary involving the boundary fixed points.

Recall that a boundary point $\xi\in\partial \mathbb D$ is called a fixed point of $f\in {\rm{\textsf{H}}}(\mathbb D, \mathbb D)$ if
$$f(\xi):=\lim\limits_{r\rightarrow 1^-}f(r\xi)=\xi.$$
Here  ${\rm{\textsf{H}}}(\mathbb D, \mathbb D)$ denotes the class of holomorphic self-mappings of the open unit disk $\mathbb D$. It is well known that, for any  $f\in H(\mathbb D, \mathbb D)$,  its radial limit is the same as its angular limit and both exist for almost all $\xi\in\partial \mathbb D$; moreover, the exceptional set in  $\partial \mathbb D$
is of capacity zero.

The classification of the boundary fixed points of $f\in {\rm{\textsf{H}}}(\mathbb D, \mathbb D)$ can be performed via the value of the \emph{angular derivative}
$$f'(\xi):=\angle\lim\limits_{z\rightarrow \xi}\frac{f(z)-\xi}{z-\xi},$$
which belongs to $(0,\infty]$ due to  the celebrated Julia-Carath\'{e}odory theorem; see \cite{Caratheodory,Abate}. This theorem also asserts that the finite angular derivative at the boundary fixed point $\xi$ exists if and only if the holomorphic function $f'$ admits  the finite angular limit $\angle\lim\limits_{z\rightarrow \xi}f'(z)$. For a boundary fixed point $\xi$ of $f$,
if $$f'(\xi)\in (0,\infty),$$ then
$\xi$ is called a regular  boundary  fixed  point. The regular points can be \emph{attractive} if $f'(\xi)\in (0,1)$, \emph{neutral} if $f'(\xi)=1$, or \emph{repulsive} if $f'(\xi)\in (1,\infty)$.

The  Julia-Carath\'{e}odory theorem \cite{Caratheodory, Abate} and the Wolff lemma \cite{Wolff2} imply that  there exists a unique regular boundary fixed point $\xi$ such that $$f'(\xi)\in (0,1]$$
if $f\in {\rm{\textsf{H}}}(\mathbb D, \mathbb D)$ with no interior fixed point; otherwise the assumption that the mapping $f\in {\rm{\textsf{H}}}(\mathbb D, \mathbb D)$ with an interior fixed point forces  $f'(\xi)>1$
for any boundary fixed point $\xi\in\partial \mathbb D$.
Moreover, Unkelbach \cite{Unkelbach} and  Herzig  \cite{Herzig} proved that if $f\in {\rm{\textsf{H}}}(\mathbb D, \mathbb D)$ has a regular boundary fixed point at point $1$, and $f(0)=0$, then
\begin{equation}\label{UHO}
f'(1)\geq\frac{2}{1+|f'(0)|}.
\end{equation}
Moreover, equality in (\ref{UHO}) holds if and only if $f$ is of the form
$$f(z)=-z\frac{a-z}{1-az},\qquad \forall\,z\in\mathbb D,$$
for some constant $a\in (-1, 0 ]$.

This result is improved sixty  years later by
  Osserman \cite{Osserman} by removing the assumption of the existence of interior fixed points.

\begin{theorem} {\bf(Osserman)}\label{Osserman}
 Let $f\in {\rm{\textsf{H}}}(\mathbb D, \mathbb D)$ with $\xi=1$ as its  regular  boundary  fixed  point. Then  \begin{equation}\label{Osserman-inequality}
 f'(1)\geq\frac{2\big(1-|f(0)|\big)^2}{1-|f(0)|^2+|f'(0)|}.
 \end{equation}
\end{theorem}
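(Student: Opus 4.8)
The plan is to transplant the interior data of $f$ to the origin by means of a disk automorphism, apply the Schwarz lemma there, and apply Julia's lemma at the boundary point, the two estimates being linked by the chain rule for angular derivatives. Write $a=f(0)$ and let $\varphi_a(w)=\dfrac{a-w}{1-\bar a w}$ be the involutive automorphism of $\mathbb D$ interchanging $0$ and $a$. Put $g:=\varphi_a\circ f\in {\rm{\textsf{H}}}(\mathbb D,\mathbb D)$, so $g(0)=0$, and the Schwarz lemma furnishes $\psi\in {\rm{\textsf{H}}}(\mathbb D,\overline{\mathbb D})$ with $g(z)=z\,\psi(z)$ and $\psi(0)=g'(0)$. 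From $\varphi_a'(w)=\dfrac{|a|^2-1}{(1-\bar a w)^2}$ one gets $\varphi_a'(a)=-\dfrac{1}{1-|a|^2}$, hence
$$\psi(0)=\varphi_a'(f(0))\,f'(0)=-\frac{f'(0)}{1-|a|^2},\qquad s:=|\psi(0)|=\frac{|f'(0)|}{1-|a|^2}\in[0,1].$$

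Next I transfer the boundary fixed point. Since $\xi=1$ is a regular fixed point, the Julia--Carath\'eodory theorem gives that $f$ has angular limit $1$ and finite angular derivative $f'(1)$ at $1$; composing with the (boundary-regular) automorphism $\varphi_a$ shows $g$, and then $\psi=g/z$, have angular limit $\eta:=\varphi_a(1)=\dfrac{a-1}{1-\bar a}\in\partial\mathbb D$ at $1$. Because $f(z)\neq1$, equivalently $g(z)\neq\eta$, on $\mathbb D$, I may factor
$$\frac{f(z)-1}{z-1}=\frac{\varphi_a(g(z))-\varphi_a(\eta)}{g(z)-\eta}\cdot\frac{g(z)-\eta}{z-1},$$
and let $z\to1$ nontangentially: the first factor tends to $\varphi_a'(\eta)\neq0$, so $g$, and then $\psi$, inherit finite angular derivatives at $1$, with $g'(1)=\eta+\psi'(1)$. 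The Julia--Carath\'eodory theorem applied to $\psi$ at $1$ (where $\psi(1)=\eta\in\partial\mathbb D$) gives $\bar\eta\,\psi'(1)=t$ for some $t\in[0,\infty)$, so $g'(1)=\eta(1+t)$. Using $1-\bar a\eta=\dfrac{1-|a|^2}{1-\bar a}$ one computes $\varphi_a'(\eta)\,\eta=\dfrac{|1-a|^2}{1-|a|^2}$, whence
$$f'(1)=\varphi_a'(\eta)\,g'(1)=\frac{|1-a|^2}{1-|a|^2}\,(1+t).$$

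It remains to bound $t$ from below, which is where the interior data re-enter. Julia's lemma for $\psi$ at the point $1$ (with boundary derivative $t$) states that for all $z\in\mathbb D$,
$$\frac{|\eta-\psi(z)|^2}{1-|\psi(z)|^2}\le t\,\frac{|1-z|^2}{1-|z|^2}.$$
Evaluating at $z=0$ and using $|\eta-\psi(0)|\ge1-|\psi(0)|=1-s$ yields $t\ge\dfrac{(1-s)^2}{1-s^2}=\dfrac{1-s}{1+s}$, hence $1+t\ge\dfrac{2}{1+s}$. Substituting and recalling $(1-|a|^2)(1+s)=1-|a|^2+|f'(0)|$ gives
$$f'(1)\ge\frac{2|1-f(0)|^2}{1-|f(0)|^2+|f'(0)|}\ge\frac{2\big(1-|f(0)|\big)^2}{1-|f(0)|^2+|f'(0)|},$$
the last inequality being $|1-f(0)|\ge1-|f(0)|$; this is the asserted estimate (in fact a slightly sharper one).

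I expect the main obstacle to be the rigorous handling of the chain rule for angular derivatives — namely that $\psi$ genuinely inherits a finite angular derivative at $1$ from $f$, the precise identification of the constant $\dfrac{|1-a|^2}{1-|a|^2}$, and the degenerate cases ($s=1$, where $\psi$ is a unimodular constant and $f$ an automorphism, to be treated directly; and $g\equiv0$, excluded since a constant map has no regular boundary fixed point; note also that when $\psi$ is nonconstant the maximum principle puts it in ${\rm{\textsf{H}}}(\mathbb D,\mathbb D)$, so Julia's lemma applies). Finally, tracing the equalities — equality in Julia's lemma, $\psi(0)$ a nonnegative multiple of $\eta$, and $|1-f(0)|=1-|f(0)|$ — forces $f(0)=a\in[0,1)$, $f'(0)\ge0$, $\eta=-1$, and $\psi(z)=-\dfrac{z+s}{1+sz}$; unwinding $f=\varphi_a\circ g$ then identifies the extremal functions as the degree-two Blaschke products $f(z)=\varphi_a\!\big(-z\tfrac{z+s}{1+sz}\big)$, $0\le s<1$ (degenerating to $f(z)=\dfrac{a+z}{1+az}$ when $s=1$), which is the additional information this elementary route supplies.
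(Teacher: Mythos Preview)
Your proof is correct and essentially identical to the paper's: both compose with a disk automorphism to reduce to $g(0)=0$, factor $g(z)=z\psi(z)$ via Schwarz, apply Julia--Carath\'eodory and Julia's inequality to $\psi$ at $z=0$, and chain-rule back to $f$. The only cosmetic differences are that the paper additionally rotates so that $g$ fixes $1$ (making your $\eta$ equal to $1$ and streamlining the bookkeeping), and that the paper records the intermediate real-part bound $g'(1)\ge 2/(1+\operatorname{Re}g'(0))$ (their Theorem~1.2) before specializing, whereas you pass directly to the modulus form $f'(1)\ge 2|1-f(0)|^2/(1-|f(0)|^2+|f'(0)|)$ and then to Osserman.
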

This inequality is strengthened very recently by Frolova et al.  in \cite{FLSV} as follows.

\begin{theorem}{\bf(Frolova et al.)}\label{Main theorem}
Let $f\in {\rm{\textsf{H}}}(\mathbb D, \mathbb D)$  with $\xi=1$ as its  regular  boundary  fixed  point. Then
 \begin{equation}\label{key inequality}
 f'(1)\geq\frac{2}{{\rm{Re}}\bigg(\dfrac{1-f(0)^2+f'(0)}{(1-f(0))^2}\bigg)}.
 \end{equation}
 \end{theorem}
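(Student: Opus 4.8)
The plan is to combine Julia's lemma — a direct consequence of the Julia–Carath\'eodory theorem quoted above — with the quantitative Schwarz lemma for maps into a half‑plane. Write $a=f(0)$, $b=f'(0)$, $\alpha=f'(1)\in(0,\infty)$, and let $\mathbb H=\{w:\operatorname{Re}w>0\}$. Since $f$ maps $\mathbb D$ into the \emph{open} disk, $1-f(z)\neq 0$ on $\mathbb D$, so
$$F(z):=\frac{1+f(z)}{1-f(z)}$$
is a well‑defined holomorphic map of $\mathbb D$ into $\mathbb H$, with $F(0)=\frac{1+a}{1-a}$ and $F'(0)=\frac{2b}{(1-a)^2}$. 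Since $\xi=1$ is a regular boundary fixed point with finite angular derivative $\alpha=f'(1)$, Julia's lemma says $f$ carries each horocycle $E(1,R)=\{z:\abs{1-z}^2/(1-\abs z^2)<R\}$ into $E(1,\alpha R)$; rewriting $E(1,R)=\{z:\operatorname{Re}\frac{1+z}{1-z}>1/R\}$ and letting $R$ decrease to the critical value at a fixed point $z$ yields the pointwise estimate $\operatorname{Re}F(z)\ge\frac1\alpha\operatorname{Re}\frac{1+z}{1-z}$ for all $z\in\mathbb D$. Consequently
$$h(z):=F(z)-\frac1\alpha\cdot\frac{1+z}{1-z}$$
is holomorphic on $\mathbb D$ with $\operatorname{Re}h\ge 0$; that is, $h$ maps $\mathbb D$ into $\overline{\mathbb H}$. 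Note that no boundary analysis at $z=1$ is needed here: $h$ is manifestly holomorphic at every interior point.

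Next I would apply the half‑plane Schwarz lemma: for $h:\mathbb D\to\overline{\mathbb H}$ holomorphic one has $\abs{h'(0)}\le 2\operatorname{Re}h(0)$ (when $\operatorname{Re}h(0)>0$ compose $h$ with the Cayley map $w\mapsto\frac{w-h(0)}{w+\overline{h(0)}}$ of $\mathbb H$ onto $\mathbb D$ and invoke the classical Schwarz lemma at $0$; when $\operatorname{Re}h(0)=0$ the map $h$ is forced to be an imaginary constant and the inequality is trivial). Substituting $h(0)=\frac{1+a}{1-a}-\frac1\alpha$ and $h'(0)=\frac{2b}{(1-a)^2}-\frac2\alpha$ and setting
$$t=\frac1\alpha,\qquad c=\operatorname{Re}\frac{1+a}{1-a},\qquad w=\frac{b}{(1-a)^2},$$
this collapses to the single scalar inequality $\abs{w-t}\le c-t$ (note $c-t=\operatorname{Re}h(0)\ge 0$ automatically).

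The conclusion then follows by elementary algebra. The inequality $\abs{w-t}\le c-t$ already forces $\operatorname{Re}w\le c$; squaring and rearranging gives
$$\bigl(c-\operatorname{Re}w\bigr)\bigl(c+\operatorname{Re}w-2t\bigr)\ \ge\ (\operatorname{Im}w)^2\ \ge\ 0,$$
whence $2t\le c+\operatorname{Re}w$ (if $c-\operatorname{Re}w>0$ this is immediate; if $c-\operatorname{Re}w=0$ then $\operatorname{Im}w=0$, $w=c$, and $t\le c$ still holds). Since $c+\operatorname{Re}w=\operatorname{Re}\frac{(1+a)(1-a)+b}{(1-a)^2}=\operatorname{Re}\frac{1-a^2+b}{(1-a)^2}$, this reads
$$\frac1{f'(1)}\ \le\ \frac12\operatorname{Re}\frac{1-f(0)^2+f'(0)}{(1-f(0))^2},$$
which is the claimed inequality, provided the right‑hand denominator is positive — and it is, since $c>0$ (the Cayley transform sends $\mathbb D$ into $\mathbb H$) and $\operatorname{Re}w\ge 2t-c$ from $\abs{w-t}\le c-t$, so $c+\operatorname{Re}w\ge 2t>0$.

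I expect the only genuinely delicate point to be the careful invocation of Julia's lemma — in particular deducing the pointwise half‑plane estimate for $F$ from the horocycle inclusions, and pinning down that the hypothesis ``$\xi=1$ is a regular boundary fixed point with finite $f'(1)$'' is exactly what licenses it; everything after that is formal. Two further remarks: first, the same computation gives the slightly sharper bound $f'(1)\ge 2(c-\operatorname{Re}w)/(c^2-\abs w^2)$, so equality in \eqref{key inequality} already forces $\operatorname{Im}\frac{f'(0)}{(1-f(0))^2}=0$ and turns the composed map $\psi\circ h$ into a rotation of $\mathbb D$ fixing $0$ — this is precisely the route to the explicit extremal functions advertised in the abstract. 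Second, bounding $\operatorname{Re}\frac{1-a^2+b}{(1-a)^2}$ above by $\frac{1-\abs a^2+\abs b}{(1-\abs a)^2}$ recovers Osserman's Theorem~\ref{Osserman} as a corollary.
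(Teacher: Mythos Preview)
Your proof is correct and takes a genuinely different route from the paper's. The paper first normalizes via the disk automorphism
\[
g(z)=\frac{f(z)-f(0)}{1-\overline{f(0)}f(z)}\cdot\frac{1-\overline{f(0)}}{1-f(0)}
\]
to arrange $g(0)=0$, then applies Julia's inequality \emph{to the Schwarz quotient} $g(z)/z$ at the single point $z=0$, obtaining $g'(1)\ge 1+\dfrac{|1-g'(0)|^2}{1-|g'(0)|^2}=\dfrac{2(1-\operatorname{Re}g'(0))}{1-|g'(0)|^2}\ge\dfrac{2}{1+\operatorname{Re}g'(0)}$, and substitutes back. You instead apply Julia's inequality \emph{directly to $f$} at every point, recast it via the Cayley transform as $\operatorname{Re}F(z)\ge\frac{1}{\alpha}\operatorname{Re}\frac{1+z}{1-z}$, subtract, and invoke the half-plane Schwarz (Carath\'eodory) estimate $|h'(0)|\le 2\operatorname{Re}h(0)$ on the difference. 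The two computations are formally equivalent under the identification $g'(0)=w/c$ (your sharper bound $\alpha\ge 2(c-\operatorname{Re}w)/(c^2-|w|^2)$ is exactly the paper's inequality $g'(1)\ge 2(1-\operatorname{Re}g'(0))/(1-|g'(0)|^2)$), but the packaging differs meaningfully: the paper's route reduces to the Unkelbach--Herzig situation and makes the extremal analysis immediate, since equality in Julia's inequality forces $g(z)/z$ to be a M\"obius transformation and hence yields the explicit Blaschke form of $g$; your route avoids any preliminary normalization, works intrinsically with $f$, and presents the result as an instance of the classical first-derivative bound for holomorphic maps into a half-plane---a viewpoint that tends to generalize well.
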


The initial proof given in \cite{FLSV} is based on  analytic semigroup  approach as well as  the Julia-Carath\'{e}odory theorem  for {\it univalent} holomorphic self-mappings of $\mathbb D$, which is proved via the method of extremal length \cite{Anderson}.

 The purpose of this article is to study  the extremal functions of  inequality (\ref{key inequality}), in addition to present an alternative and  elementary proof of (\ref{key inequality}). Our main result is as follows.

\begin{theorem}\label{Main theorem-2}
Let $f\in {\rm{\textsf{H}}}(\mathbb D, \mathbb D)$  with $\xi=1$ as its  regular  boundary  fixed  point. Then equality holds in inequality $(\ref{key inequality})$ if and only if $f$ is of the form
\begin{equation}\label{exe-fun}
f(z)=\dfrac{f(0)-z\dfrac{a-z}{1-az}\dfrac{1-f(0)}{1-\overline{f(0)}}}{1-z\dfrac{a-z}{1-az}
\dfrac{1-f(0)}{1-\overline{f(0)}}\overline{f(0)}},\qquad \forall\,z\in\mathbb D,
\end{equation}
for some constant $a\in [-1,1)$.
\end{theorem}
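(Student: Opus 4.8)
The plan is to reduce the problem to the Unkelbach--Herzig--Osserman equality case \eqref{UHO} by an automorphism normalization, following the same philosophy that (one expects) underlies the alternative proof of \eqref{key inequality} itself. First I would introduce the disk automorphism $\varphi_w(z) = \dfrac{w - z}{1 - \overline{w}z}$ and set $w = f(0)$. Composing on the left, let $g = \varphi_{w}\circ f$, so that $g \in \mathsf{H}(\mathbb D,\mathbb D)$ and $g(0) = 0$. The point $\xi = 1$ is no longer a fixed point of $g$ in general, but $g(1) = \varphi_w(1) = \dfrac{w-1}{1-\overline w} =: \eta \in \partial\mathbb D$, a well-defined boundary value, and the chain rule for angular derivatives gives $g'(1) = \varphi_w'(f(1))\, f'(1) = \varphi_w'(1)\, f'(1)$. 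A second automorphism on the left, $h = \varphi_{\eta}\circ g = \varphi_\eta\circ\varphi_w\circ f$, sends $\eta \mapsto 0$, fixes nothing a priori at the boundary, but $h$ has $1$ as a \emph{regular} boundary fixed point once we also rotate: in fact $\eta^{-1} h$ fixes $1$. The cleanest route is: conjugate $f$ by a rotation so that the computations line up, then verify that $f$ achieves equality in \eqref{key inequality} precisely when the normalized map achieves equality in \eqref{UHO}.

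More concretely, the key computational step is to express the right-hand side of \eqref{key inequality} in terms of data at $0$ of the normalized map. Writing $\lambda = \dfrac{1-f(0)}{1-\overline{f(0)}}$ (a unimodular constant), one checks that $\mathrm{Re}\!\left(\dfrac{1-f(0)^2+f'(0)}{(1-f(0))^2}\right) = \dfrac{1}{2}\!\left(1 + \big|(\varphi_{f(0)}\circ f)'(0)/\lambda\big|\right)$ or an equivalent identity; the precise algebraic manipulation is the place where \eqref{key inequality} is seen to be exactly \eqref{UHO} applied to the map $F(z) = \overline{\lambda}\,\varphi_{f(0)}(f(z))$, which satisfies $F(0) = 0$ and has $1$ as a regular boundary fixed point with $F'(1) = \overline{\lambda}\,\varphi_{f(0)}'(1) f'(1)$. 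Then the Unkelbach--Herzig equality statement forces $F(z) = -z\dfrac{a-z}{1-az}$ for some $a \in (-1,0]$, and inverting the normalization $f = \varphi_{f(0)}^{-1}(\lambda F) = \varphi_{f(0)}(\lambda F)$ (using $\varphi_w = \varphi_w^{-1}$) yields exactly \eqref{exe-fun}. One must be slightly careful about the parameter range: the stated range $a\in[-1,1)$ in \eqref{exe-fun} is broader than $(-1,0]$, which suggests that either a further Möbius reparametrization of $a$ is involved, or the extremal family is genuinely larger than a naive transport of the Unkelbach--Herzig family and the endpoint/sign conventions need separate attention.

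The converse direction—that every $f$ of the form \eqref{exe-fun} actually attains equality—should be a direct substitution: plug \eqref{exe-fun} into \eqref{key inequality}, compute $f(0)$, $f'(0)$, and $f'(1)$ in terms of $a$ and $f(0)$, and verify the identity. This is routine but must be done to close the ``if'' half, and it also serves as an internal consistency check on the parameter range for $a$. I would also need to confirm that for $f$ of the form \eqref{exe-fun} the point $1$ is indeed a regular boundary fixed point (finite angular derivative, fixed), which follows from the explicit Blaschke-type structure.

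The main obstacle I anticipate is not the normalization bookkeeping but the verification that the equality analysis is \emph{tight}: the Unkelbach--Herzig theorem as quoted in \eqref{UHO} requires $f(0)=0$, and transporting its equality characterization through two automorphisms is only valid if every intermediate inequality used is itself sharp and if the normalized map genuinely falls under the hypotheses of \eqref{UHO} (in particular that $1$ remains a \emph{regular} boundary fixed point after normalization, with the angular derivative transforming correctly—this uses the Julia--Carath\'{e}odory chain rule, which holds but should be cited). A secondary subtlety, already flagged above, is reconciling the parameter domain $a\in[-1,1)$ in \eqref{exe-fun} with the domain $a\in(-1,0]$ in the Unkelbach--Herzig equality case; I expect this to come out of the explicit form of $\lambda$ and a reparametrization $a \mapsto$ (Möbius image of $a$), and getting that correspondence exactly right, including the boundary value $a=-1$, will require the most care.
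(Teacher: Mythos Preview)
Your normalization step is exactly the one the paper uses: with $\lambda=\dfrac{1-f(0)}{1-\overline{f(0)}}$, the map $g(z)=\overline{\lambda}\,\dfrac{f(z)-f(0)}{1-\overline{f(0)}f(z)}$ (your $-F$) lies in $\mathsf{H}(\mathbb D,\mathbb D)$, satisfies $g(0)=0$, and has $1$ as a regular boundary fixed point, with $f'(1)=\dfrac{|1-f(0)|^{2}}{1-|f(0)|^{2}}\,g'(1)$. However, the identity you propose,
\[
\mathrm{Re}\!\left(\frac{1-f(0)^{2}+f'(0)}{(1-f(0))^{2}}\right)\;\stackrel{?}{=}\;\tfrac12\bigl(1+|g'(0)|\bigr),
\]
is false: a direct computation gives instead
\[
\mathrm{Re}\!\left(\frac{1-f(0)^{2}+f'(0)}{(1-f(0))^{2}}\right)\;=\;\frac{1-|f(0)|^{2}}{|1-f(0)|^{2}}\bigl(1+\mathrm{Re}\,g'(0)\bigr),
\]
so that \eqref{key inequality} is equivalent, after normalization, to $g'(1)\ge \dfrac{2}{1+\mathrm{Re}\,g'(0)}$, \emph{not} to the Unkelbach--Herzig bound $g'(1)\ge \dfrac{2}{1+|g'(0)|}$ of \eqref{UHO}. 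The former is strictly stronger whenever $g'(0)\notin\mathbb R$, and its equality family is strictly larger. This is precisely why your transported parameter range $a\in(-1,0]$ does not match the claimed $a\in[-1,1)$: no reparametrization will fix this, because you are invoking a weaker inequality with a smaller extremal set.

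What the paper does after the same normalization is to bypass \eqref{UHO} entirely and apply Julia's lemma and the Julia--Carath\'{e}odory theorem directly to $h(z)=g(z)/z$. Since $h(0)=g'(0)$ and $h(1)=1$, Julia gives $h'(1)\ge \dfrac{|1-g'(0)|^{2}}{1-|g'(0)|^{2}}$, whence
\[
g'(1)=1+h'(1)\;\ge\;\frac{2\bigl(1-\mathrm{Re}\,g'(0)\bigr)}{1-|g'(0)|^{2}}\;\ge\;\frac{2}{1+\mathrm{Re}\,g'(0)}.
\]
Equality forces $h$ to be a M\"obius transformation (Julia) \emph{and} $g'(0)\in\mathbb R$ (the second inequality), which together yield $g(z)=z\,\dfrac{z-a}{1-az}$ with $a\in[-1,1)$, and inverting the normalization gives \eqref{exe-fun}. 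So your overall architecture is right, but the engine you plug in at the normalized level has to be Julia's inequality for $g(z)/z$, not \eqref{UHO}.
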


As a direct consequence of Theorems \ref{Main theorem} and \ref{Main theorem-2}, we obtain a strong version of Osserman's inequality.
\begin{corollary}\label{Main corollary}
 Let $f\in {\rm{\textsf{H}}}(\mathbb D, \mathbb D)$  with $\xi=1$ as its  regular  boundary  fixed  point. Then
 \begin{equation}\label{inequality from below}
f'(1)\geq\frac{2\big|1-f(0)\big|^2}{1-|f(0)|^2+|f'(0)|}.
 \end{equation}
 Moreover, equality holds in this inequality if and only if $f$ is of the form
\begin{equation}\label{exe-funs}
f(z)=\dfrac{f(0)-z\dfrac{a-z}{1-az}\dfrac{1-f(0)}{1-\overline{f(0)}}}{1-z\dfrac{a-z}{1-az}
\dfrac{1-f(0)}{1-\overline{f(0)}}\overline{f(0)}},\qquad \forall\,z\in\mathbb D,
\end{equation}
for some constant $a\in [-1,0]$.
\end{corollary}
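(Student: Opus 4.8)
My plan is to obtain the Corollary as a formal consequence of Frolova's inequality $(\ref{key inequality})$ together with the equality characterisation of Theorem~\ref{Main theorem-2}, the only analytic input being the elementary bound $\operatorname{Re}w\le\abs w$, with equality exactly when $w$ is a nonnegative real. First I would set $p:=f(0)$ and $\lambda:=\dfrac{1-f(0)}{1-\overline{f(0)}}$ (so $\abs\lambda=1$) and split
\[
\frac{1-p^{2}+f'(0)}{(1-p)^{2}}=\frac{1+p}{1-p}+\frac{f'(0)}{(1-p)^{2}} .
\]
Using $\operatorname{Re}\dfrac{1+p}{1-p}=\dfrac{1-\abs p^{2}}{\abs{1-p}^{2}}$ together with $\operatorname{Re}\dfrac{f'(0)}{(1-p)^{2}}\le\dfrac{\abs{f'(0)}}{\abs{1-p}^{2}}$, I would obtain
\[
\operatorname{Re}\!\left(\frac{1-p^{2}+f'(0)}{(1-p)^{2}}\right)\le\frac{1-\abs p^{2}+\abs{f'(0)}}{\abs{1-p}^{2}} .
\]
Before inverting I would record that the left-hand side is strictly positive: by Schwarz--Pick $\abs{f'(0)}\le1-\abs p^{2}$, so the real part is $\ge0$, and it equals $0$ only for the automorphism $z\mapsto\dfrac{p-\lambda z}{1-\bar p\lambda z}$, which does not fix $\xi=1$. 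Feeding the last display into $(\ref{key inequality})$ then yields $(\ref{inequality from below})$.

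For the equality assertion I would first note that, by the two displays above, the right-hand side of $(\ref{key inequality})$ is no smaller than that of $(\ref{inequality from below})$, while $f'(1)$ dominates both; hence equality in $(\ref{inequality from below})$ forces at once equality in $(\ref{key inequality})$ and equality in $\operatorname{Re}\dfrac{f'(0)}{(1-p)^{2}}=\dfrac{\abs{f'(0)}}{\abs{1-p}^{2}}$, i.e.\ $\dfrac{f'(0)}{(1-p)^{2}}\ge0$. By Theorem~\ref{Main theorem-2} the first condition says $f$ is of the form $(\ref{exe-fun})$ for some $a\in[-1,1)$. Writing such an $f$ as $\varphi_{p}\circ(\lambda B)$ with $\varphi_{p}(w):=\dfrac{p-w}{1-\bar pw}$ and $B(z):=z\dfrac{a-z}{1-az}$, the chain rule gives $f'(0)=\varphi_{p}'(0)\,\lambda B'(0)=-(1-\abs p^{2})\,\lambda a$, whence
\[
\frac{f'(0)}{(1-p)^{2}}=\frac{-a\,(1-\abs p^{2})}{\abs{1-p}^{2}} ,
\]
which is a nonnegative real precisely when $a\le0$. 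Thus equality in $(\ref{inequality from below})$ holds if and only if $f$ has the form $(\ref{exe-fun})$ with the additional constraint $a\in[-1,0]$, i.e.\ $f$ is of the form $(\ref{exe-funs})$; conversely any such $f$ turns both of the above estimates into equalities, so $(\ref{inequality from below})$ is attained.

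The argument is in essence bookkeeping and I do not expect a genuine obstacle; the two points needing care are the evaluation $f'(0)=-a\bigl(1-\abs{f(0)}^{2}\bigr)\dfrac{1-f(0)}{1-\overline{f(0)}}$ for $f$ given by $(\ref{exe-fun})$ (one must carry the unimodular factor $\dfrac{1-f(0)}{1-\overline{f(0)}}$ correctly and check that the parameter range $[-1,1)$ is not shrunk by a degeneracy), and the strict positivity of $\operatorname{Re}\bigl(\tfrac{1-f(0)^{2}+f'(0)}{(1-f(0))^{2}}\bigr)$, which is exactly what legitimises inverting $(\ref{key inequality})$. With these in hand the Corollary drops out of Theorems~\ref{Main theorem} and~\ref{Main theorem-2}.
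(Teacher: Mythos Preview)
Your proof is correct and follows essentially the same route as the paper: both derive $(\ref{inequality from below})$ from $(\ref{key inequality})$ via the elementary bound $\operatorname{Re}\dfrac{f'(0)}{(1-f(0))^{2}}\le\dfrac{|f'(0)|}{|1-f(0)|^{2}}$, and both characterise equality by the condition $\dfrac{f'(0)}{(1-f(0))^{2}}\in[0,\infty)$, which under the parametrisation of Theorem~\ref{Main theorem-2} amounts to $a\in[-1,0]$. The paper phrases the last step through the auxiliary function $g$ (noting $g'(0)=-a\in[0,1]$), whereas you compute $f'(0)$ directly via the chain rule; the content is the same.
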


\begin{remark}\label{Main remark}
From Corollary \ref{Main corollary}, one easily deduce that equality in (\ref{Osserman}) hold if and only if $f$ is of the form
$$f(z)=\dfrac{f(0)-z\dfrac{a-z}{1-az}}{1-z\dfrac{a-z}{1-az}f(0)}, \qquad \forall\,z\in\mathbb D,$$
for some constant $a\in [-1,0]$ with $f(0)\in \mathbb [0,1)$.
\end{remark}

As an application, Corollary \ref{Main corollary}  immediately results in a quantitative strengthening of a classical theorem of L\"{o}wner (i.e  the second assertion in the following Corollary, see \cite{Lowner}) as follows.
\begin{corollary} {\bf(L\"{o}wner)}\label{thm:Lowner}
Let $f\in {\rm{\textsf{H}}}(\mathbb D, \mathbb D)$ and $f(0)=0$. Assume that $f$ extends continuously to an arc $C\in\partial\mathbb D$ of length $s$ and maps it onto an arc $f(C)\in\partial\mathbb D$ of length $\sigma$. Then
$$\sigma\geq\frac{2}{1+|f'(0)|}s.$$
In particular, we have
\begin{equation}\label{Lowner ineq}
\sigma\geq s
\end{equation}
with equality if and only if either $\sigma=s=0$ or $f$ is just a rotation.
\end{corollary}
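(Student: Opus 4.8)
The plan is to localize the strong Osserman inequality $(\ref{inequality from below})$ of Corollary~\ref{Main corollary} at each point of the arc $C$ and then integrate it against arclength. Since $f$ maps $C$ into $\partial\mathbb D$, we have $|f|\equiv 1$ on the relatively open arc $C$; as $f$ is bounded and holomorphic on $\mathbb D$ and continuous up to $C$, the reflection principle provides a holomorphic continuation of $f$ across $C$. In particular $f$ is nonconstant (otherwise $f\equiv f(0)=0$ would contradict $|f|\equiv1$ on $C$), and $f'$ is a genuine holomorphic function on a neighbourhood of $C$. Writing $C=\{e^{i\theta}:\theta\in[\alpha,\beta]\}$, so that $s=\beta-\alpha$, I would choose a continuous branch $\phi(\theta):=\arg f(e^{i\theta})$ on $[\alpha,\beta]$; differentiating the identity $|f(e^{i\theta})|=1$ shows that $e^{i\theta}f'(e^{i\theta})/f(e^{i\theta})=\phi'(\theta)$ is real-valued with $|\phi'(\theta)|=|f'(e^{i\theta})|$.

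Next, for $\theta\in(\alpha,\beta)$ set $\xi:=e^{i\theta}$ and $\eta:=f(\xi)\in\partial\mathbb D$, and consider the rotation-normalized self-map
\[
F_\xi(z):=\overline{\eta}\,f(\xi z),\qquad z\in\mathbb D .
\]
Then $F_\xi\in {\rm{\textsf{H}}}(\mathbb D,\mathbb D)$ is nonconstant, $F_\xi(0)=\overline{\eta}f(0)=0$, and $F_\xi(1)=\overline{\eta}\eta=1$; moreover $F_\xi$ inherits a holomorphic continuation across $1$ from $f$, so its angular derivative at $1$ is the ordinary derivative $F_\xi'(1)=\overline{\eta}\,\xi f'(\xi)$, which is finite and, by the Julia--Carath\'eodory theorem applied at the fixed point $1$, a positive real number. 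Hence $1$ is a \emph{regular} boundary fixed point of $F_\xi$, and in fact $F_\xi'(1)=\xi f'(\xi)/f(\xi)=\phi'(\theta)$, so $\phi'(\theta)=|f'(e^{i\theta})|>0$ throughout $(\alpha,\beta)$. Applying Corollary~\ref{Main corollary} to $F_\xi$, and using that $F_\xi(0)=0$ makes the right-hand side of $(\ref{inequality from below})$ equal to $2/(1+|F_\xi'(0)|)$ together with $|F_\xi'(0)|=|f'(0)|$, I obtain
\[
\phi'(\theta)=F_\xi'(1)\ \ge\ \frac{2}{1+|f'(0)|}\qquad\text{for every }\theta\in(\alpha,\beta).
\]

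Since $\phi$ is $C^1$ with $\phi'>0$ on $(\alpha,\beta)$, the map $\theta\mapsto f(e^{i\theta})$ is strictly monotone, hence injective, on $[\alpha,\beta]$, so the length of the arc $f(C)$ is $\sigma=\int_{\alpha}^{\beta}\phi'(\theta)\,d\theta$. Integrating the displayed inequality over $[\alpha,\beta]$ then yields $\sigma\ge \dfrac{2}{1+|f'(0)|}\,s$; and since the classical Schwarz lemma (valid because $f(0)=0$) gives $|f'(0)|\le1$, we have $\dfrac{2}{1+|f'(0)|}\ge1$, whence $\sigma\ge s$. For the equality discussion: if $s=0$ then trivially $\sigma=0$, the first alternative; if $s>0$ and $\sigma=s$, then each inequality in $\sigma=\int_C|f'(e^{i\theta})|\,d\theta\ge\frac{2}{1+|f'(0)|}\,s\ge s=\sigma$ is forced to be an equality, so $|f'(0)|=1$, and the equality statement in the Schwarz lemma gives $f(z)\equiv cz$ with $|c|=1$, i.e. $f$ is a rotation. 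Conversely, a rotation (or the degenerate case $s=0$) obviously produces $\sigma=s$.

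The points I expect to require the most care are: (i) the step from ``$f$ continuous on $\mathbb D\cup C$ with $|f|\equiv1$ on $C$'' to ``$f$ holomorphic across $C$'' via the reflection principle, which is precisely what legitimizes treating $f'$ classically on $C$; (ii) the identity $\sigma=\int_C|f'(e^{i\theta})|\,d\theta$, that is, the strict monotonicity of $\arg f(e^{i\theta})$ guaranteeing that $f|_C$ is injective and that the length of $f(C)$ carries no multiplicity corrections; and (iii) the verification that the normalized map $F_\xi$ indeed has $1$ as a \emph{regular} boundary fixed point, so that Corollary~\ref{Main corollary} genuinely applies. Once these are in place, the remainder is the short computation indicated above.
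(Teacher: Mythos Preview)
Your approach is essentially identical to the paper's: use Schwarz reflection to extend $f$ across $C$, normalize via $F_\xi(z)=\overline{f(\xi)}\,f(\xi z)=f(\xi z)/f(\xi)$ to obtain a self-map fixing $0$ and $1$, apply Corollary~\ref{Main corollary} pointwise to get $|f'(\xi)|\ge 2/(1+|f'(0)|)$, and integrate over $C$. You supply considerably more detail than the paper's terse proof (including the equality discussion, which the paper leaves implicit); one small remark is that your injectivity claim for $f|_C$ can fail if $\phi$ increases by more than $2\pi$, but this is harmless since the paper stipulates (in the Remark following the corollary) that $\sigma$ is computed with multiplicity, so $\sigma=\int_\alpha^\beta \phi'(\theta)\,d\theta$ regardless.
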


\begin{remark}
The length $\sigma$ of $f(C)$ is to be taken with multiplicity, if $f(C)$ is a multiple covering of the image.
\end{remark}
For generalizations of Theorems \ref{Main theorem} and \ref{Main theorem-2} as well as  Corollary \ref{Main corollary} to the setting of quaternions for slice regular self-mappings of the open unit ball $\mathbb B\in\mathbb H$, see \cite{RW}.

\section{Proof of main results}
In this section, we shall give the proofs of the main results. Before presenting the details, we first recall the concrete contents of the classical Julia lemma and Julia-Carath\'{e}odory theorem; see e.g. \cite{Abate, Sarason1}, \cite[p. 48 and p. 51]{Sarason2}.

\begin{lemma}{\bf(Julia)}\label{Julia}
Let $f\in {\rm{\textsf{H}}}(\mathbb D, \mathbb D)$ and let $\xi\in\partial \mathbb D$. Suppose that there exists a sequence $\{z_n\}_{n\in \mathbb N}\subset \mathbb D$ converging to $\xi$ as $n$ tends to $\infty$, such that the limits
$$\alpha:=\lim\limits_{n\rightarrow\infty}
\frac{1-|f(z_n)|}{1-|z_n|}$$
and
$$\eta:=\lim\limits_{n\rightarrow\infty}f(z_n)$$
exist $($finitely$)$. Then $\alpha>0$ and the  inequality
\begin{eqnarray}\label{ineq:Julia}
\frac{\big|f(z)-\eta\big|^2}{1-|f(z)|^2}\leq \alpha\,\frac{|z-\xi|^2}{1-|z|^2}
\end{eqnarray}
holds throughout the open unit disk $\mathbb D$ and is strict except for  M\"obius transformations of $\mathbb D$.
\end{lemma}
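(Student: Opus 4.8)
The plan is to deduce everything from the classical Schwarz--Pick inequality together with the elementary algebraic identity
$$1-\left|\frac{a-b}{1-\bar b a}\right|^2=\frac{(1-|a|^2)(1-|b|^2)}{|1-\bar b a|^2},\qquad a,b\in\mathbb D,$$
and then to pass to the limit along the given sequence $\{z_n\}$. First I would apply the Schwarz--Pick lemma to the pair $(z,z_n)$, which gives $\bigl|\tfrac{f(z)-f(z_n)}{1-\overline{f(z_n)}f(z)}\bigr|\le\bigl|\tfrac{z-z_n}{1-\bar z_n z}\bigr|$; rewriting both sides through the identity above and isolating the $z_n$-dependent factors yields
$$\frac{1-|f(z)|^2}{|1-\overline{f(z_n)}f(z)|^2}\cdot\frac{1-|f(z_n)|^2}{1-|z_n|^2}\ \ge\ \frac{1-|z|^2}{|1-\bar z_n z|^2}.$$
This is the crucial inequality: the middle factor tends to $\alpha$ by hypothesis, and everything else converges once the boundary data are identified.

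Next I would pin down those data. Since $z_n\to\xi\in\partial\mathbb D$ we have $|z_n|\to1$, so finiteness of $\alpha$ forces $1-|f(z_n)|=\alpha(1-|z_n|)+o(1-|z_n|)\to0$; hence $|f(z_n)|\to1$ and $|\eta|=1$. Using $|\eta|=|\xi|=1$ one has $1-\overline{f(z_n)}f(z)\to 1-\bar\eta f(z)$ with $|1-\bar\eta f(z)|=|f(z)-\eta|$, and $|1-\bar z_n z|\to|z-\xi|$. Letting $n\to\infty$ in the displayed inequality therefore gives
$$\alpha\,\frac{1-|f(z)|^2}{|f(z)-\eta|^2}\ \ge\ \frac{1-|z|^2}{|z-\xi|^2}\ >\ 0 .$$
Because $|f(z)|<1=|\eta|$ forces $f(z)\ne\eta$, the left factor is finite and positive, so the strict positivity of the right-hand side yields $\alpha>0$; taking reciprocals then produces exactly \eqref{ineq:Julia}.

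For the equality clause I would reformulate \eqref{ineq:Julia} in terms of functions with positive real part. With the identity $\operatorname{Re}\tfrac{\zeta+w}{\zeta-w}=\tfrac{1-|w|^2}{|w-\zeta|^2}$ (valid for $|\zeta|=1$, $w\in\mathbb D$), inequality \eqref{ineq:Julia} is equivalent to $\operatorname{Re}P(z)\ge0$ on $\mathbb D$, where
$$P(z)=\frac{\eta+f(z)}{\eta-f(z)}-\frac1\alpha\,\frac{\xi+z}{\xi-z}$$
is holomorphic (both Cayley transforms map $\mathbb D$ into the right half-plane, and $\alpha>0$ keeps the scaling inside it). If equality holds in \eqref{ineq:Julia} at a single interior point $z_0$, then the nonnegative harmonic function $\operatorname{Re}P$ attains its minimum value $0$ at $z_0$, so by the minimum principle $\operatorname{Re}P\equiv0$ on $\mathbb D$, whence $P$ reduces to a purely imaginary constant $ic$. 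Solving
$$\frac{\eta+f(z)}{\eta-f(z)}=\frac1\alpha\,\frac{\xi+z}{\xi-z}+ic$$
for $f$ exhibits it as a M\"obius transformation carrying $\mathbb D$ onto $\mathbb D$ (as $z\to\xi$ the right-hand side tends to $\infty$, so $f(z)\to\eta$, confirming the map is onto), i.e. an automorphism; conversely every such automorphism gives equality everywhere.

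I expect the equality clause, not the inequality, to be the main obstacle: the inequality is obtained by a limiting procedure that degrades the strict Schwarz--Pick inequality (valid whenever $f$ is not an automorphism) into a non-strict one, so strictness cannot be read off directly from the limit. The positive-real-part reformulation is what circumvents this, converting ``equality somewhere'' into ``an interior minimum of a nonnegative harmonic function,'' where the minimum principle applies cleanly and forces the rigid M\"obius form. The only remaining points needing care are the justification that all relevant limits exist --- immediate once $|\eta|=1$ is established --- and the verification that the resulting M\"obius map is a genuine automorphism of $\mathbb D$.
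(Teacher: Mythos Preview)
Your argument is correct, but note that the paper does not supply its own proof of Julia's lemma: it merely \emph{recalls} the statement as a classical result, citing \cite{Abate,Sarason1} and \cite[p.~48 and p.~51]{Sarason2}, and then uses it as a tool in the proof of Theorems~\ref{Main theorem} and~\ref{Main theorem-2}. So there is no in-paper proof to compare against.

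That said, your proof is the standard one and is carried out cleanly. The derivation of the inequality from Schwarz--Pick via the identity $1-\bigl|\tfrac{a-b}{1-\bar b a}\bigr|^2=\tfrac{(1-|a|^2)(1-|b|^2)}{|1-\bar b a|^2}$ and the passage to the limit are correct; the observation that finiteness of $\alpha$ forces $|\eta|=1$, and hence $|1-\bar\eta f(z)|=|f(z)-\eta|$, is exactly what is needed. Your treatment of the equality case is also right and is the natural way to recover strictness after a limiting process has been taken: rewriting \eqref{ineq:Julia} as $\operatorname{Re}P\ge 0$ for the holomorphic function $P(z)=\tfrac{\eta+f(z)}{\eta-f(z)}-\tfrac{1}{\alpha}\tfrac{\xi+z}{\xi-z}$ and invoking the minimum principle for harmonic functions yields $P\equiv ic$, and since $z\mapsto \tfrac{1}{\alpha}\tfrac{\xi+z}{\xi-z}+ic$ maps $\mathbb D$ onto the open right half-plane, the resulting $f$ is a genuine automorphism of $\mathbb D$. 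The converse (automorphisms give equality) follows immediately from equality in Schwarz--Pick. One cosmetic remark: you might state explicitly that $\tfrac{1-|f(z_n)|^2}{1-|z_n|^2}\to\alpha$ uses $|f(z_n)|\to1$ and $|z_n|\to1$ to cancel the factors $1+|f(z_n)|$ and $1+|z_n|$, but this is implicit in what you wrote.
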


\begin{theorem}\label{Julia-Caratheodory}{\bf(Julia-Carath\'{e}odory)}
Let $f\in {\rm{\textsf{H}}}(\mathbb D, \mathbb D)$ and let $\xi\in\partial \mathbb D$. Then the following conditions are equivalent:
\begin{enumerate}
\item[(i)]  The lower limit
\begin{eqnarray}\label{def:alpha-Julia}\alpha:=\liminf\limits_{z\rightarrow \xi}\dfrac{1-|f(z)|}{1-|z|}
\end{eqnarray} is finite,
where the limit is taken as $z$ approaches $\xi$ unrestrictedly in $\mathbb D$;

\item[(ii)]  $f$ has a non-tangential limit, say $f(\xi)$, at the point $\xi$, and the difference quotient $$\frac{f(z)-f(\xi)}{z-\xi}$$ has a non-tangential limit, say $f'(\xi)$, at the point $\xi$;
\item[(iii)]  The derivative $f'$ has a non-tangential limit, say $f'(\xi)$, at the point $\xi$.
\end{enumerate}

Moreover, under the above conditions we have
\begin{enumerate}
\item[(a)]  $\alpha>0$ in $(\rm{i})$;

\item[(b)]  the   derivatives $f'(\xi)$ in $(\rm{ii})$ and $(\rm{iii})$ are the same;

\item[(c)]  $f'(\xi)=\alpha \overline{\xi}f(\xi)$;

\item[(d)]  the quotient $\dfrac{1-|f(z)|}{1-|z|}$ has the non-tangential limit $\alpha$ at the point $\xi$.
\end{enumerate}
\end{theorem}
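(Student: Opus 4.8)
The plan is to take Julia's Lemma (Lemma \ref{Julia}) as the main engine and to prove $(\mathrm{i})\Rightarrow(\mathrm{ii}),(\mathrm{iii})$ together with (a)--(d) in one sweep, then close the loop with the comparatively soft implications $(\mathrm{iii})\Rightarrow(\mathrm{ii})\Rightarrow(\mathrm{i})$. First I would normalize: composing $f$ with the rotation $z\mapsto\bar\xi z$ before, and (once the boundary value is known) with $w\mapsto\bar\eta w$ after, leaves every quantity in the statement equivariant, so there is no loss in assuming $\xi=1$ and eventually $f(\xi)=1$.

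Assume (i). By definition of the lower limit there is a sequence $z_n\to\xi$ with $\frac{1-|f(z_n)|}{1-|z_n|}\to\alpha<\infty$; passing to a subsequence we may assume $f(z_n)\to\eta$. Since $1-|z_n|\to0$ and $\alpha$ is finite, $1-|f(z_n)|\to0$, so $\eta\in\partial\mathbb D$. Julia's Lemma then yields simultaneously $\alpha>0$ --- which is assertion (a) --- and the horocycle inequality
$$\frac{|f(z)-\eta|^2}{1-|f(z)|^2}\le\alpha\,\frac{|z-\xi|^2}{1-|z|^2},\qquad z\in\mathbb D.$$
In a Stolz angle at $\xi$ one has $|z-\xi|\le C(1-|z|)$, whence $\frac{|z-\xi|^2}{1-|z|^2}\to0$; the inequality then forces $f(z)\to\eta$ non-tangentially, so $f$ has the angular limit $f(\xi)=\eta\in\partial\mathbb D$. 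This establishes the first half of (ii).

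For the value of the derivative and assertion (d), I would pass to the right half-plane. With $\xi=\eta=1$ set $p=\frac{1+f}{1-f}$ and $q=\frac{1+z}{1-z}$, so that $\mathrm{Re}\,p=\frac{1-|f|^2}{|1-f|^2}$, $\mathrm{Re}\,q=\frac{1-|z|^2}{|1-z|^2}$, and the horocycle inequality becomes exactly $\mathrm{Re}\,p\ge\frac1\alpha\,\mathrm{Re}\,q$; equivalently $g:=\alpha p-q$ has non-negative real part. A short computation gives $\frac{1-f}{1-z}=\frac{q+1}{p+1}$, so the difference quotient converges precisely when $q/p$ does. Using the Herglotz representation of functions with non-negative real part --- through which the radial growth $(1-r)p(r)\to2\mu_p(\{1\})$ is governed by the point mass of the representing measure at $1$ --- I would identify $\lim_{r\to1}q(r)/p(r)$, and via a Lindel\"{o}f-type theorem upgrading radial to non-tangential limits obtain the angular limit of the difference quotient. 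Matching the radial limit of $\frac{1-|f(r)|}{1-r}$ against the lower limit $\alpha$ (the horocycle bound gives $\le\alpha$ radially because $|z-\xi|=1-|z|$ on the radius, while the $\liminf$ definition gives $\ge\alpha$) pins the value to $\alpha$, yielding (d) and, after undoing the normalization, the formula $f'(\xi)=\alpha\bar\xi f(\xi)$ of (c).

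Finally I would connect (ii) and (iii) by the standard fact that, for a holomorphic $f$ having an angular limit at $\xi$, the difference quotient $\frac{f(z)-f(\xi)}{z-\xi}$ and the derivative $f'(z)$ share the same non-tangential limiting behaviour: integrating $f'$ along the segment toward $\xi$ gives $(\mathrm{iii})\Rightarrow(\mathrm{ii})$ with equal limits, proving (b), and a Lindel\"{o}f argument gives the converse. The easy closing implication $(\mathrm{ii})\Rightarrow(\mathrm{i})$ follows by reading off the boundedness of $\frac{1-|f(z)|}{1-|z|}$ along the radius from the existence of the radial derivative limit. The main obstacle is the sharp identification of the constant: the crude use of $|f(z)-\eta|\ge1-|f(z)|$ in the horocycle inequality only bounds a Stolz-angle limit by $\alpha C^2$, so the exact value $\alpha$ must be extracted from the radial direction first, where the geometry is tight, and then propagated to all non-tangential approach through the Herglotz/Lindel\"{o}f machinery --- this two-sided matching, rather than Julia's inequality itself, is the delicate heart of the argument.
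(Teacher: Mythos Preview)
The paper does not prove this theorem. Theorem~\ref{Julia-Caratheodory} is stated there purely as background, introduced with the phrase ``we first recall the concrete contents of the classical Julia lemma and Julia-Carath\'{e}odory theorem'' and accompanied only by citations to Abate and Sarason; it is then invoked as a black box in the proofs of Theorems~\ref{Main theorem} and~\ref{Main theorem-2}. There is therefore nothing in the paper to compare your proposal against.

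That said, your sketch follows the standard route found in the cited references (particularly Sarason): normalize to $\xi=\eta=1$, feed a minimizing sequence into Julia's Lemma to obtain the horocycle inequality and $\alpha>0$, pass to the half-plane via the Cayley map to convert the horocycle bound into $\mathrm{Re}\,(\alpha p - q)\ge 0$, and then use Herglotz point-mass analysis plus a Lindel\"{o}f principle to identify the angular derivative and pin down the constant. The one place your outline is thin is the passage from (ii) to (iii): the existence of a non-tangential limit for the difference quotient does \emph{not}, by Lindel\"{o}f alone, give a non-tangential limit for $f'$ --- one needs the additional input that $|f'|$ is already controlled non-tangentially (e.g.\ via the Schwarz--Pick inequality $|f'(z)|\le\frac{1-|f(z)|^2}{1-|z|^2}$, which in a Stolz angle is bounded by a constant times the difference quotient), or an argument with Cauchy's formula on shrinking disks. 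You gesture at this but do not name the mechanism; without it the implication (ii)$\Rightarrow$(iii) has a gap.
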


\bigskip
Now we come to prove Theorems \ref{Main theorem} and   \ref{Main theorem-2}.
\begin{proof}[Proofs of Theorems $\ref{Main theorem}$ and  $\ref{Main theorem-2}$]
Let $f$ be as described in Theorem $\ref{Main theorem}$. Set
$$g(z):=\frac{f(z)-f(0)}{1-\overline{f(0)}f(z)}\frac{1-\overline{f(0)}}{1-f(0)},$$ which is in ${\rm{\textsf{H}}}(\mathbb D, \mathbb D)$ such that $\xi=1$ is its regular  boundary  fixed  point and $g(0)=0$. Moreover, an easy calculation shows that
\begin{equation}\label{ineq:01}
f'(1)=\frac{|1-f(0)|^2}{1-|f(0)|^2}\,g'(1),
\end{equation}
and
\begin{equation}\label{ineq:02}
g'(0)=\frac{f'(0)}{1-|f(0)|^2}\frac{1-\overline{f(0)}}{1-f(0)},
\end{equation}
which is no more than one in modulus.
Applying the Julia-Carath\'{e}odory theorem and the Julia inequality (\ref{ineq:Julia}) in the Julia lemma to the holomorphic function
  $h: \mathbb D\rightarrow \overline{\mathbb D}$
defined by
$$h(z
):=\dfrac{g(z)}{z},\qquad \forall\,z\in\mathbb D,$$
we obtain
\begin{equation}\label{derivative ineq}
g'(1)=1+h'(1)\geq 1+\frac{|1-g'(0)|^2}{1-|g'(0)|^2}=\frac{2\big(1-\textrm{Re}g'(0)\big)}{1-|g'(0)|^2}.
\end{equation}
In particular,
\begin{equation}\label{ineq:03}
  g'(1)\geq\frac{2}{1+\textrm{Re}g'(0)}.
\end{equation}
Now inequality  (\ref{key inequality}) follows by substituting equalities in (\ref{ineq:01}) and  (\ref{ineq:02}) into (\ref{ineq:03}).

If equality holds  in inequality (\ref{key inequality}), then equalities also hold in the Julia inequality (\ref{ineq:Julia}) at point $z=0$ and inequality (\ref{ineq:03}), it follows from the condition for equality in the Julia inequality and that for equality in inequality (\ref{ineq:03}) that
 \begin{equation}\label{ineq:04}
 g(z)=z\frac{z-a}{1-\bar{a}z}\frac{1-\bar{a}}{1-a},
\end{equation}
for some constant $a\in\overline{\mathbb D}$, and $g'(0)\in(-1,1]$, which is possible only if $a\in [-1,1)$. Consequently, $f$ must be of the form
\begin{equation}\label{extremal function}
f(z)=\dfrac{f(0)-z\dfrac{a-z}{1-az}\dfrac{1-f(0)}{1-\overline{f(0)}}}{1-z\dfrac{a-z}{1-az}
\dfrac{1-f(0)}{1-\overline{f(0)}}\overline{f(0)}},\qquad \forall\,z\in\mathbb D,
\end{equation}
where $a\in [-1,1)$.
Therefore, the equality in inequality (\ref{key inequality}) can hold only for holomorphic self-mappings  of  the form (\ref{extremal function}), and a direct calculation shows that it does indeed hold for all such holomorphic self-mappings.
This completes the proof.
\end{proof}

\begin{proof}[Proof of Corollary $\ref{Main corollary}$]
Inequality (\ref{inequality from below})  follows immediately from inequality (\ref{key inequality}), and equality in (\ref{inequality from below}) holds if and only if
$$\frac{f'(0)}{\big (1-f(0)\big)^2} \in [0, \infty),$$
which is equivalent to $g'(0)\in \mathbb [0, 1]$, i.e. $a\in [-1,0]$.
Here the function $g$ is the one in (\ref{ineq:04}).

\end{proof}

\begin{proof}[Proof of Corollary $\ref{thm:Lowner}$]
By the classical Schwarz reflection principle, $f$ can extend to be holomorphic in the interior of $C$. Applying Corollary $\ref{Main corollary}$ to the holomorphic self-mapping of $\mathbb D$ defined by
$$g(z)=\frac{f(\xi z)}{f(\xi)}, \qquad \forall\, z\in\mathbb D$$ yields the inequality $$|f'(\xi)|\geq\frac{2}{1+|f'(0)|}$$ for any point $\xi$ in the interior of $C$. Consequently, the desired result follows.
\end{proof}

\bibliographystyle{amsplain}

\end{document}